\documentclass[10pt,letterpapper]{amsart}

\usepackage{graphicx}
\usepackage[colorlinks = true,
  linkcolor = blue,
  urlcolor  = blue,
  citecolor = blue,
  anchorcolor = blue]{hyperref}
\usepackage[hmargin=1in, vmargin=1in]{geometry}
\renewcommand{\theequation}{\thesection.\arabic{equation}}
\usepackage{subfig}

\theoremstyle{plain}
\newtheorem*{main}{Main Theorem}
\newtheorem{thm}{Theorem}
\newtheorem{lem}{Lemma}
\newtheorem{prop}[lem]{Proposition}
\newtheorem{cor}[lem]{Corollary}
\theoremstyle{definition}

\newtheorem{rmk}{Remark}

\newcommand{\din}{\mathrm{in}}
\newcommand{\dout}{\mathrm{out}}

\renewcommand{\[}{\begin{equation}\notag\begin{aligned}}
\renewcommand{\]}{\end{aligned}\end{equation}}
\newcommand{\beq}[1]{\begin{equation}\label{#1}\begin{aligned}}
\newcommand{\beqeps}[1]%
  {\addtocounter{equation}{1}\begin{equation}\label{#1}\tag{$\theequation\epsilon$}\begin{aligned}}

\newcommand{\p}{\begin{pmatrix}}
\newcommand{\pp}{\end{pmatrix}}

\author{Ting-Hao Hsu}
\address{Department of Mathematics\\ The Ohio State University\\ Columbus, OH 43210}
\email{hsu.296@osu.edu}
\title[Bifurcation Delay Using Geometric Singular Perturbation Theory]%
{On Bifurcation Delay: An Alternative Approach Using\\ Geometric Singular Perturbation Theory}

\keywords{
Bifurcation delay;
Bifurcation delay;
Pontryagin delay;
Delay of instability;
Entry-exit function;
Slow-fast system;
Geometric Singular perturbation Theory;
Exchange Lemma.%
}
\subjclass[2010]{34E20,37C10}

\begin{document}

\maketitle
\begin{center}
\today
\end{center}

\begin{abstract}
To explain the phenomenon of bifurcation delay,
which occurs in planar systems
of the form $\dot{x}=\epsilon f(x,z,\epsilon)$, $\dot{z}=g(x,z,\epsilon)z$,
where $f(x,0,0)>0$ and $g(x,0,0)$ changes sign at least once on the $x$-axis,
we use the Exchange Lemma 
in Geometric Singular Perturbation Theory
to track the limiting behavior of the solutions.
Using the trick of extending dimension
to overcome the degeneracy at the turning point,
we show that the limiting attracting and
repulsion points are given by the well-known entry-exit function,
and the maximum of $z$ on the trajectory
is of order $\exp(-1/\epsilon)$.
Also we prove smoothness the return map
up to arbitrary finite order in $\epsilon$.
\end{abstract}

\section{Introduction} \label{sec_intro}
Consider the planar system \beqeps{sf_xz}
  &\dot{x}= \epsilon f(x,z,\epsilon)\\
  &\dot{z}= g(x,z,\epsilon)z
\] with $x\in \mathbb R$, $z\in \mathbb R$,
$f$ and $g$ are $C^1$ functions satisfying \beq{cond_fg}
  f(x,0,0)>0;\quad
  g(x,0,0)<0 \;\;\text{for}\; x<0
  \quad\text{and}\quad
  g(x,0,0)>0 \;\;\text{for}\; x>0.
\]

\begin{figure}[t]
{\centering
\subfloat[\label{fig_xz0}]{
\boxed
{\includegraphics[trim = 2.4cm 8cm 2cm 7cm, clip, width=.48\textwidth]{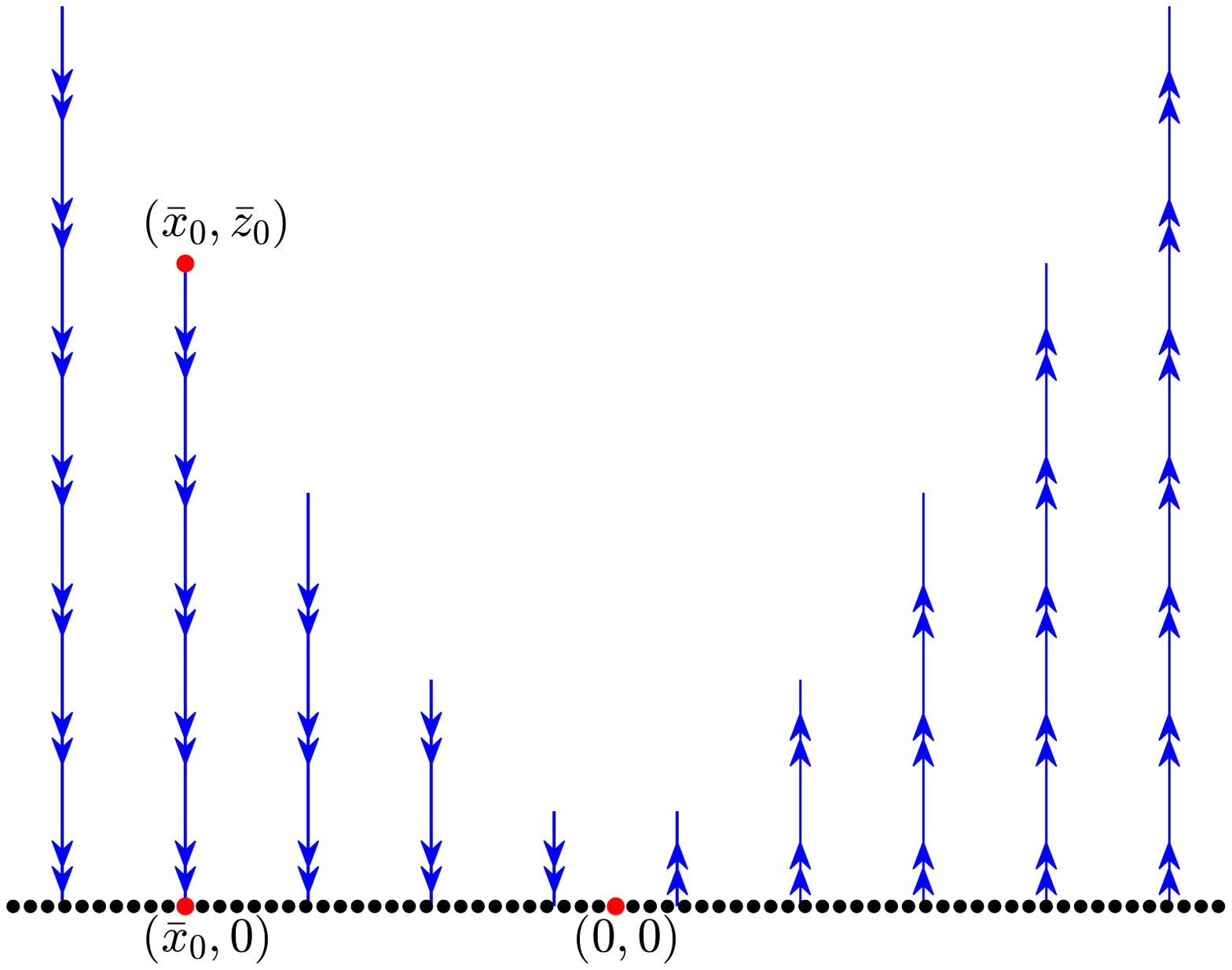}}
}
\subfloat[\label{fig_xz1}]{
\boxed
{\includegraphics[trim = 2.4cm 8cm 2cm 7cm, clip, width=.48\textwidth]{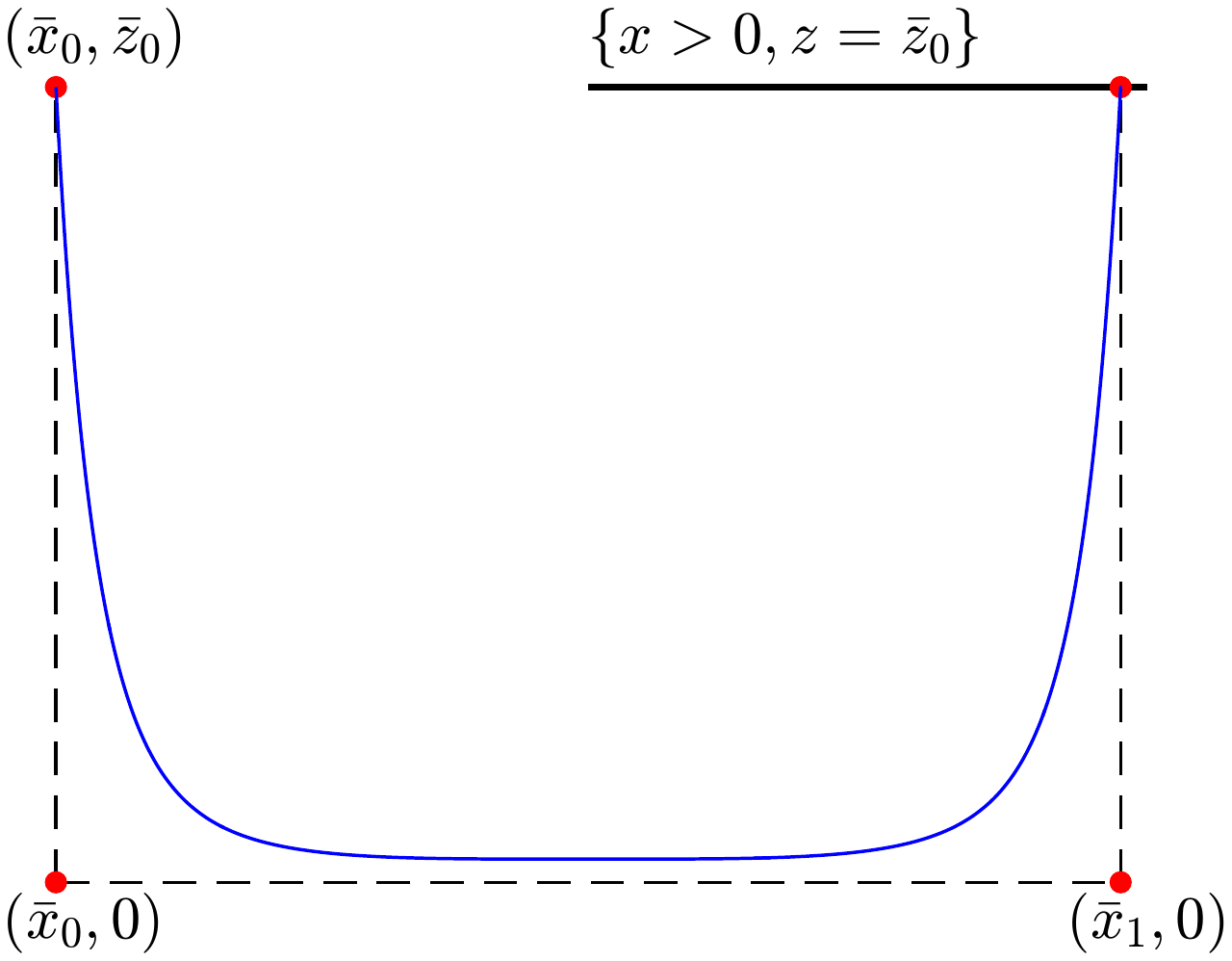}}
} \\
}
\caption{
(a) When $\epsilon=0$, the $x$-axis is a line of equilibria for \eqref{sf_xz}.
The trajectory starting at $(x_0,z_0)$ goes straight down to the $x$-axis.
Separated by the turning point $(x,z)=(0,0)$,
the $x$-axis changes from attracting to repelling.
(b) When $\epsilon>0$ and small,
the trajectory of \eqref{sf_xz} starting at $(x_0,z_0)$
first tends to the $x$-axis,
and then it turns at $(x_0,0)$.
The trajectory turns again when it approaches the point $(x_1,0)$
satisfying $\int_{x_0}^{x_1}\frac{g(x,0,0)}{f(x,0,0)}\;dx=0$.
}
\label{fig_xz}
\end{figure}

Note that \eqref{sf_xz} is a \emph{slow-fast} system
\cite{Jones:1994,Kuehn:2016}
with fast variable $z$
and slow variable $x$.
Fix any $x_0<0$ and choose $z_0>0$ small enough
so that $g(x_0,z,0)<0$ for all $z\in [0,z_0].$
When $\epsilon=0$, it is clear that the trajectory starting at $(x_0,z_0)$
goes straight to $(x_0,0)$.
The $x$-axis is attracting when $x<0$ and repelling when $x>0$
since $g(x,0,0)$ changes sign at $x=0$.
For $\epsilon>0$,
besides being attracted by the $x$-axis,
the trajectory also moves right at speed of order $\epsilon$.
After the trajectory passes $x=0$,
the $x$-axis becomes repelling,
so the trajectory tends to move away from the $x$-axis.
See Fig \ref{fig_xz0}.
However, it is well known that,
for small $\epsilon>0$,
the trajectory does not immediately leave the vicinity of the $x$-axis after crossing the origin.
Instead, the trajectory stays at the $x$-axis until it approaches
the point $(x_1,0)$ which satisfies \beq{defn_x1}
  \int_{x_0}^{x_1}\frac{g(x,0,0)}{f(x,0,0)}\;dx=0.
\]
See Fig \ref{fig_xz1}.
This phenomenon has been called
``bifurcation delay" \cite{Benoit:1991},
``Pontryagin delay" \cite{Mishchenko:1994},
or ``delay of instability" \cite{Liu:2000},
and the function $x_0\mapsto x_1$ implicitly defined by \eqref{defn_x1}
is called the entry-exit \cite{Benoit:1981}
or way in-way out \cite{Diener:1984} function.

Bifurcation delay
has been studied by various methods in the literature,
including asymptotic expansion \cite{Haberman:1979,Mishchenko:1994},
comparison to solutions constructed by separation of variables \cite{Schecter:1985},
gradient estimates using the variational equation \cite{De-Maesschalck:2008a},
and the blow-up method of geometric singular perturbation theory \cite{De-Maesschalck:2016}.

Our results stated below are included in the literature,
but the proof in this note
provides a new approach
using geometric singular perturbation theory.

\begin{main}
Consider \eqref{sf_xz}, where $f$ and $g$ are $C^{r+1}$, $r\ge 1$,
and satisfy \eqref{cond_fg}.
Choose $x_0< 0$ such that
there exists $x_1>0$ satisfying \eqref{defn_x1}.
If $z_0>0$ is small enough, then the following holds.
Let $\bar{\gamma}_\epsilon$ be the trajectory of \eqref{sf_xz}
that starts at the point $(x_0,z_0)$
and ends at the cross section $\{x>0, z=z_0\}$.
Then \beq{closeness_gamma}
  \bar{\gamma}_\epsilon
  \to \bar{\gamma}_1\cup \bar{\gamma}_0\cup \bar{\gamma}_2
  \equiv
  \Big( \{x_0\}\times [0,z_0] \Big)
  \cup \Big( [x_0,x_1]\times \{0\} \Big)
  \cup \Big( \{x_1\}\times [0,z_0] \Big)
  \quad\text{as}\quad
  \epsilon\to 0
\] in the sense of point-sets, and \beq{min_z}
  \min_{(x,z)\in \bar{\gamma}_\epsilon} z
  = \exp\left(\frac{-\zeta_0+o(1)}{\epsilon}\right),
  \quad
  \text{where}\quad
  \zeta_0=\int_{x_0}^0 \frac{|g(x,0,0)|}{f(x,0,0)}\; dx.
\]
Moreover,
for any compact interval $K\subset (-\infty,0)$
such that
$x_1$ is well-defined by \eqref{defn_x1} for each $x_0\in K$,
there exist $\epsilon_0>0$
such that if we set $x_{1,\epsilon}$ by $x_{1,0}=x_1$ and \[
  (x_{1,\epsilon},z_0)
  = \bar{\gamma}_\epsilon\cap \{x>0,z=z_0\},
  \quad \epsilon\in (0,\epsilon_0],
\] then $x_{1,\epsilon}$
is a $C^r$ function of $(x_0,\epsilon)$ on $K\times [0,\epsilon_0]$.
\end{main}

In Section \ref{sec_sing_config}
we analyze the structure of \eqref{sf_xz}
as a slow-fast system.
In Section \ref{sec_EL}
we state a simple case of the Exchange Lemma
which can be applied in our context.
In Section \ref{sec_complete}
we complete the proof of the Main Theorem.

\section{Singular Configuration} \label{sec_sing_config}
Fix $z_0>0$ small enough so that $g$ is negative on $\bar{\gamma}_1$
and is positive on $\bar{\gamma}_2$,
where $\bar{\gamma}_i$ are defined in \eqref{closeness_gamma}.

From the equation of $\dot{x}$ in \eqref{sf_xz},
to prove \eqref{closeness_gamma}
we expect the travel time of $\gamma_\epsilon$ to be of order $1/\epsilon$,
so we set $\tau=\epsilon t$, where $t$ is the time variable in \eqref{sf_xz},
and expect the change in $\tau$ along the $\gamma_\epsilon$ to be of order $1$.
Fix any $\Delta>0$ with $\Delta<\frac12\min\{|x_0|,x_1\}$.
Set \beq{defn_Ieps}
  \mathcal{I}_\epsilon = \left\{
    \p x \\ z \\ \tau \pp
    = \p x_0 \\ z_0 \\ \sigma \pp:\;
    |\sigma|<\Delta
  \right\},\quad
  \mathcal{J}_\epsilon = \left\{
    \p x \\ z \\ \tau \pp
    = \p x_1+\sigma \\ z_0 \\ \tau_1 \pp:\;
    |\sigma|<\Delta
  \right\},
\] where $\tau_1>0$ is defined later in \eqref{defn_tau1}.
Our strategy is to show that the manifolds evolved from
$\mathcal{I}_\epsilon$ and $\mathcal{J}_\epsilon$ along the flow \eqref{sf_xz}
have nonempty intersection.

\begin{figure}[t]
{\centering
{\includegraphics[trim = 2cm 7.8cm 2cm 6.8cm, clip, width=.5\textwidth]
{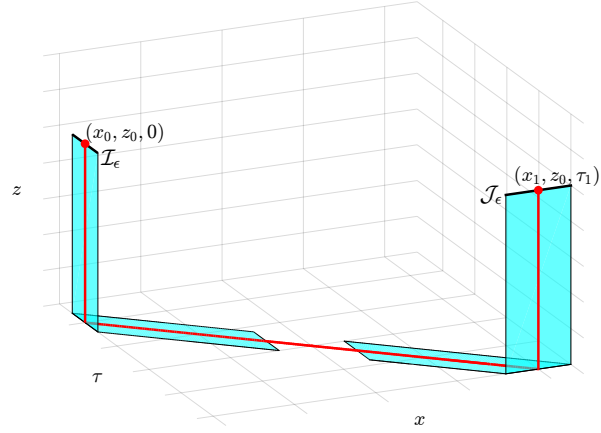}}\\
}
\caption{
The manifolds evolved from $\mathcal{I}_\epsilon$ and $\mathcal{J}_\epsilon$
are two surfaces nearly parallel to $\{z=0\}$ in $(x,z,\tau)$-space.
}
\label{fig_parallel}
\end{figure}

To track the slow time variable $\tau$,
we append the equation $\dot{\tau}=\epsilon$ to the system \eqref{sf_xz}.
That is, we write \eqref{sf_xz} as \beqeps{sf_tau}
  &\dot{x}= \epsilon f(x,z,\epsilon)\\
  &\dot{z}= g(x,z,\epsilon)z \\
  &\dot{\tau}= \epsilon.
\] Intuitively, when $\epsilon>0$ is small,
$\mathcal{I}_\epsilon$ and $\mathcal{J}_\epsilon$ first go straight to the plane $\{z=0\}$,
and then go along the flow in $(x,\tau)$-plane.
Although the two surfaces evolved from $\mathcal{I}_\epsilon$ and $\mathcal{J}_\epsilon$
may go along close to a common trajectory on the plane,
they are both nearly parallel to $\{z=0\}$.
Thus it is not clear whether they intersect;
see Fig \ref{fig_parallel}.
To overcome this difficulty, we introduce the new variable \beq{defn_zeta}
  \zeta= \epsilon\log(1/z).
\] 
Using $\dot{\zeta}=-\epsilon \dot{z}/z$,
the system \eqref{sf_tau} can be expressed as \beqeps{sf_xzeta}
  &\dot{x}= \epsilon f(x,z,\epsilon) \\
  &\dot{z}= g(x,z,\epsilon)z \\
  &\dot{\zeta}= -\epsilon g(x,z,\epsilon) \\
  &\dot{\tau}= \epsilon.
\]
Note that \eqref{sf_xzeta} a slow-fast system,
which has two distinguished limiting systems.
The \emph{limiting fast system} (or \emph{layer problem}),
obtained by setting $\epsilon=0$ in \eqref{sf_xzeta}, 
is \beq{fast_xzeta}
  &\dot{z}= g(x,z,0)z \\
  &\dot{x}= 0,\;
  \dot{\zeta}= 0,\;
  \dot{\tau}= 0,
\] and the \emph{limiting slow system} (or \emph{reduced problem}) is \beq{slow_xzeta}
  &z=0 \\
  &x'= f(x,0,0) \\
  &\zeta' = -g(x,0,0) \\
  &\tau'= 1,
\] where $\prime$ is $\frac{d}{d\tau}$.
The spirit of Geometric Singular Perturbation Theory
is to first study the limiting systems,
which have no parameter $\epsilon$ and have lower dimension,
and then make conclusion about the full system for $\epsilon>0$.

In $(x,z,\zeta,\tau)$-space,
$\mathcal{I}_\epsilon$ and $\mathcal{J}_\epsilon$ are parametrized as \beq{defn_Ieps}
  \mathcal{I}_\epsilon= \left\{
    \p x \\ z \\ \zeta \\ \tau \pp
    = \p x_0 \\ z_0 \\ 0 \\ 0 \pp
    +\sigma \p 0 \\ 0 \\ 0 \\ 1 \pp
    +\epsilon \p 0 \\ 0 \\ \log(1/z_0) \\ 0 \pp
    :\; 
    |\sigma|< \Delta
  \right\}
\]
and \beq{defn_Jeps}
  \mathcal{J}_\epsilon= \left\{
    \p x \\ z \\ \zeta \\ \tau \pp
    = \p x_1 \\ z_0 \\ 0 \\ \tau_1 \pp
    +\sigma \p 1 \\ 0 \\ 0 \\ 0 \pp
    +\epsilon \p 0 \\ 0 \\ \log(1/z_0) \\ 0 \pp 
    :\;
    |\sigma|< \Delta
  \right\}.
\]

From now on we identify the $(x,\zeta,\tau)$-space
with the set $\{z=0\}$ in $(x,z,\zeta,\tau)$-space (and temporarily ignore the relation \eqref{defn_zeta} between $z$ and $\zeta$).
Let $\Lambda_L$ and $\Lambda_R$ be the
$\omega$- and $\alpha$-limit sets of
$\mathcal I_0$ and $\mathcal J_0$ along the flow of \eqref{fast_xzeta}. 
Then \beq{defn_LambdaLR}
  \Lambda_L= \left\{
    \p x \\ \zeta \\ \tau \pp
    = \p x_0 \\ 0 \\ \sigma \pp:\;
    |\sigma|< \Delta
  \right\}, \quad
  \Lambda_R= \left\{
    \p x \\ \zeta \\ \tau \pp
    = \p x_1+\sigma \\ 0 \\ \tau_1 \pp:\;
    |\sigma|< \Delta
  \right\}.
\]
The following proposition describes
the manifolds evolved from $\Lambda_L$ and $\Lambda_R$
along the slow flow \eqref{slow_xzeta}.

\begin{prop}\label{prop_MLR}
Assume $x_0$ and $x_1$ satisfy \eqref{defn_x1}.
Set \beq{defn_tau1}
  \tau_1
  = \int_{x_0}^{x_1}\frac{1}{f(x,0,0)}\;dx.
\] Let $M_L$ and $M_R$
be the manifolds evolved from $\Lambda_L$ and $\Lambda_R$
along the slow flow \eqref{slow_xzeta}.
Then $M_L$ and $M_R$
intersect transversally
along a curve $\gamma_0$
in $(x,\zeta,\tau)$-space.
Moreover, the projection of $\gamma_0$ in $(x,z)$-space is
$\bar{\gamma}_0$ defined in \eqref{closeness_gamma},
and the minimum of $\zeta$ on $\gamma_0$ equals $\zeta_0$ defined in \eqref{min_z}.
\end{prop}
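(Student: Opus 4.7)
My plan is to exploit first integrals of the slow flow \eqref{slow_xzeta} to write both $M_L$ and $M_R$ as explicit graphs, then read off the intersection by solving two scalar equations. Setting
\[
G(x) := \int_0^x \frac{g(u,0,0)}{f(u,0,0)}\,du, \qquad F(x) := \int_0^x \frac{du}{f(u,0,0)},
\]
one checks directly that $\zeta + G(x)$ and $\tau - F(x)$ are constant along every trajectory of the slow flow on $\{z=0\}$, with $f(x,0,0)>0$ making $x$ a valid coordinate along those trajectories over the relevant range.

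Propagating from the initial data on $\Lambda_L$ and $\Lambda_R$ via these first integrals gives
\[
M_L = \{(x,\,G(x_0)-G(x),\,\sigma_L + F(x)-F(x_0)) : x \geq x_0,\ |\sigma_L| < \Delta\},
\]
\[
M_R = \{(x,\,G(x_1+\sigma_R)-G(x),\,\tau_1 + F(x)-F(x_1+\sigma_R)) : x \leq x_1+\sigma_R,\ |\sigma_R| < \Delta\}.
\]
Matching $\zeta$- and $\tau$-coordinates at a common $x$ forces $G(x_1+\sigma_R) = G(x_0)$ and $\sigma_L - F(x_0) = \tau_1 - F(x_1+\sigma_R)$. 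The entry-exit relation \eqref{defn_x1} is exactly $G(x_1) = G(x_0)$, and since $G'(x) = g(x,0,0)/f(x,0,0) > 0$ on a neighborhood of $x_1 > 0$, the first equation has the unique nearby solution $\sigma_R = 0$; substituting and using the definition \eqref{defn_tau1} of $\tau_1$ then forces $\sigma_L = 0$. The intersection is therefore the curve
\[
\gamma_0 = \{(x,\,G(x_0)-G(x),\,F(x)-F(x_0)) : x_0 \leq x \leq x_1\},
\]
whose $(x,z)$-projection is $[x_0,x_1]\times\{0\} = \bar{\gamma}_0$.

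For transversality, at any $p \in \gamma_0$ both $T_p M_L$ and $T_p M_R$ contain the common slow-flow direction $v_0(x) := (1,\,-g(x,0,0)/f(x,0,0),\,1/f(x,0,0))$. The complementary tangent directions are $(0,0,1)$ (from $\partial_{\sigma_L}$) inside $M_L$ and $(0,\,g(x_1,0,0)/f(x_1,0,0),\,-1/f(x_1,0,0))$ (from $\partial_{\sigma_R}|_{\sigma_R=0}$) inside $M_R$; together with $v_0(x)$ they span $\mathbb{R}^3$ because $g(x_1,0,0) > 0$, so the intersection is transversal all along $\gamma_0$. Finally, $\zeta(x) = G(x_0) - G(x)$ on $\gamma_0$ attains its extremum where $G'(x) = g/f$ vanishes, namely at $x = 0$, with extremal value $G(x_0) - G(0) = -\int_{x_0}^0 g(x,0,0)/f(x,0,0)\,dx = \zeta_0$.

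I do not anticipate any genuine obstacle: the content of the proposition is essentially that the entry-exit condition \eqref{defn_x1} and the choice of $\tau_1$ in \eqref{defn_tau1} are precisely the two scalar identities needed to make $M_L \cap M_R$ nonempty and of codimension one. Once the first-integral parametrization is in place, the remaining work is algebraic bookkeeping plus a small determinant computation to verify transversality.
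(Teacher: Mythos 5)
Your proof is correct and follows essentially the same route as the paper: integrate the slow flow (you phrase it via the first integrals $\zeta+G(x)$ and $\tau-F(x)$, the paper via explicit parametrizations $\zeta_\pm(x;\cdot)$, $\tau_\pm(x;\cdot)$), match at a common $x$ to pin down $\sigma_L=\sigma_R=0$ using \eqref{defn_x1} and \eqref{defn_tau1}, and verify transversality by checking that the tangent vectors, together with the slow-flow direction, span $\mathbb R^3$ because $f>0$ and $g(x_1,0,0)>0$. One small point worth flagging: the proposition (and the paper's proof) call $\zeta_0$ the \emph{minimum} of $\zeta$ on $\gamma_0$, but your computation shows it is actually the \emph{maximum} --- $\zeta(x)=G(x_0)-G(x)$ vanishes at both endpoints, increases on $(x_0,0)$ and decreases on $(0,x_1)$ --- which is the version consistent with \eqref{min_z}, since $z=e^{-\zeta/\epsilon}$ is smallest precisely where $\zeta$ is largest; your cautious word ``extremum'' sidesteps that typo.
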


See Fig \ref{fig_MLR} for an illustration of Proposition \ref{prop_MLR}.

\begin{proof}
By integrating \eqref{slow_xzeta},
a portion of $M_L$ can be parametrized as \beq{para_ML}
  M_L = \left\{
    \p x\\ \zeta \\ \tau \pp
    =\p x\\ \zeta_-(x;\hat\tau_0) \\ \tau_-(x;\hat\tau_0) \pp:\;
    \begin{array}{l}
    x\in [x_0,x_1]\\[.5em]
    \hat\tau_0\in [-\Delta,\Delta]
    \end{array}
  \right\}
\] where \beq{defn_zetaL}
  \zeta_-(x;\hat\tau_0)= \int_{x_0}^x \frac{-g(r,0,0)}{f(r,0,0)}\;dr,\quad
  \tau_-(x;\hat\tau_0)= \hat\tau_0+ \int_{x_0}^x \frac{1}{f(r,0,0)}\;dr,
\] and a portion of $M_R$ can be parametrized as \beq{para_MR}
  M_R= \left\{
    \p x\\ \zeta \\ \tau \pp
    =\p x\\ \zeta_+(x;\hat{x}_1) \\ \tau_+(x;\hat{x}_1) \pp:\;
    \begin{array}{l}
    x_1\in [\hat{x}_1-\Delta,\hat{x}_1+\Delta] \\[.5em]
    x\in [x_0,x_1]
    \end{array}
  \right\}
\] where \beq{defn_zetaR}
  \zeta_+(x;\hat{x}_1)= \int_{x}^{\hat{x}_1} \frac{g(r,0,0)}{f(r,0,0)}\;dr,\quad
  \tau_+(x;\hat{x}_1)= \tau_1- \int_x^{\hat{x}_1} \frac{1}{f(r,0,0)}\;dr.
\]
From the assumption \eqref{defn_x1} and the definition \eqref{defn_tau1},
we have \beq{eq_zeta_pm}
  \zeta_-(0;0)= \int_{x_0}^0\frac{|g(r,0,0)|}{f(r,0,0)}\;dr
  =\zeta_+(0;x_1),\quad
  \tau_-(0;0)= \int_{x_0}^0\frac{1}{f(r,0,0)}\;dr
  = \tau_+(0;x_1).
\] From uniqueness of solution of \eqref{slow_xzeta},
it follows that
$M_L$ and $M_R$ intersect along the curve \beq{defn_gamma0}
  \gamma_0
  = \left\{
    \p x\\ \zeta \\ \tau \pp
    =\p x\\ \zeta_-(x;0) \\ \tau_-(x;0) \pp
    =\p x\\ \zeta_+(x;x_1) \\ \tau_+(x;x_1) \pp:\;
    x\in [x_0,x_1]
  \right\}.
\] Clearly the projection of $\gamma_0$ in $(x,z)$-space is $\bar{\gamma}_0$
and the minimum of $\zeta$ on $\gamma_0$ is ${\zeta}_0$ defined in \eqref{min_z}.
It remains to show that the intersection is transversal.

Fix any $\hat{x}\in [x_0,x_1]$.
Let $\hat{q}=\gamma_0\cap \{x=\hat{x}\}$.
Taking derivatives in $x$ and $\tau_0$
in \eqref{para_ML}, we obtain \[
  T_{\hat{q}}M_L= \mathrm{Span}\;\left\{
    \p f(\hat{x},0,0) \\ -g(\hat{x},0,0) \\1 \pp,
    \p 0 \\ 0 \\ 1 \pp
  \right\}.
\]
Similarly, taking derivatives in $x$ and $x_1$ in \eqref{para_MR} we obtain \[
  T_{\hat{q}}M_R= \mathrm{Span}\;\left\{
    \p f(\hat{x},0,0) \\ -g(\hat{x},0,0) \\1 \pp,
    \p 0 \\ g(x_1,0,0) \\ -1 \pp
  \right\}.
\] Since $f(\hat{x},0,0)> 0$ and $g(x_1,0,0)> 0$,
the union of $T_{\bar{q}}M_L$ and $T_{\bar{q}}M_R$
spans the $(x,\zeta,\tau)$-space.
This means the intersection is transversal.
\end{proof}

For the limiting fast system \eqref{fast_xzeta},
the forward trajectory of $(x_0,z_0,0,0)$
and the backward trajectory of $(x_1,z_0,0,0)$ 
are \beq{defn_gamma12}
  \gamma_1
  = \left\{
    \p x \\ z \\ \zeta \\ \tau \pp
    = \p x_0 \\ z \\ 0 \\ 0 \pp :\;
      z\in [0,z_0]
  \right\}
  \quad\text{and}\quad
  \gamma_{2}
  = \left\{
    \p x \\ z \\ \zeta \\ \tau \pp
    = \p x_1 \\ z \\ 0 \\ 0 \pp :\;
      z\in [0,z_0]
  \right\},
\] respectively.
Combined with $\gamma_0$ given in Proposition \ref{prop_MLR},
now we have the configuration \beq{config}
  (x_0,z_0,0,0)
  \overset{\gamma_1}{\longrightarrow}
  (x_0,0,0,0)
  \overset{\gamma_0}{\longrightarrow}
  (x_1,0,0,0)
  \overset{\gamma_2}{\longrightarrow}
  (x_1,z_0,0,0),
\] where $\gamma_1$ and $\gamma_2$ are
trajectories of the limiting fast system \eqref{fast_xzeta},
and $\gamma_0$ is a trajectory of 
the limiting slow system \eqref{slow_xzeta}.

Note that for each $\gamma_i$ ($i=0,1,2$),
the projection in $(x,z)$-space is $\bar{\gamma}_i$ defined in \eqref{closeness_gamma},
and the projection in $(x,z,\tau)$-space is shown in Fig \ref{fig_parallel}.
In $(x,\zeta,\tau)$-space,
$\gamma_1$ and $\gamma_2$ collapse to points,
and $\gamma_0$ is as shown in Fig \ref{fig_MLR}.

To track the manifolds
evolved from $\mathcal I_\epsilon$ and $\mathcal J_\epsilon$,
we make the following simple observation.

\begin{figure}[t]
{\centering
{\includegraphics[trim = 2cm 7.8cm 2cm 6.8cm, clip, width=.5\textwidth]
{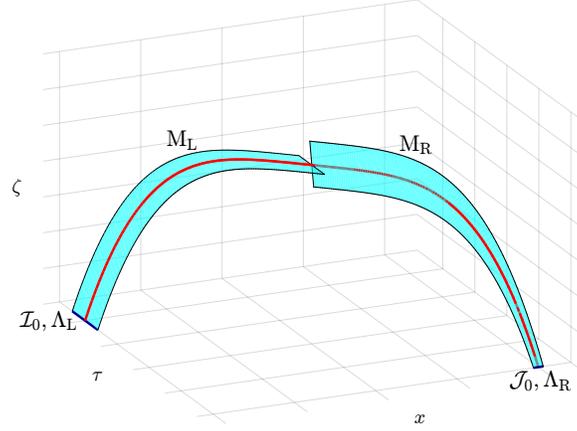}}\\
}
\caption{
The projections of $\mathcal{I}_0$ and $\mathcal{J}_0$
are $\Lambda_L$ and $\Lambda_R$.
The manifolds $M_{L,R}$ evolved from $\Lambda_{L,R}$
intersect transversally along a curve
$\gamma_0$ in $(x,\zeta,\tau)$-space.
}
\label{fig_MLR}
\end{figure}

\begin{prop}\label{prop_zzeta}
Consider the system \eqref{sf_xzeta}
in $(x,z,\zeta,\tau)$-space.
Suppose $\epsilon>0$.
Let $\mathcal{I}_\epsilon^*$ and $\mathcal{J}_\epsilon^*$
be the manifolds evolved from $\mathcal I_\epsilon$ and $\mathcal J_\epsilon$.
Then the following statements are equivalent. \begin{itemize}
  \item[$\mathrm{(i)}$] $\mathcal{I}_\epsilon^*$ and $\mathcal{J}_\epsilon^*$
  have nonempty intersection
  in $(x,z,\zeta,\tau)$-space.
  \item[$\mathrm{(ii)}$]
  The projections of $\mathcal{I}_\epsilon^*$ and $\mathcal{J}_\epsilon^*$
  in $(x,z,\tau)$-space
  have nonempty intersection.
  \item[$\mathrm{(iii)}$]
  The projections of $\mathcal{I}_\epsilon^*$ and $\mathcal{J}_\epsilon^*$
  in $(x,\zeta,\tau)$-space
  have nonempty intersection.
\end{itemize}
\end{prop}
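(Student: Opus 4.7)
The plan is to exhibit a first integral of \eqref{sf_xzeta} that pins both evolved manifolds to a single three-dimensional hypersurface, on which the coordinate projections to $(x,z,\tau)$-space and to $(x,\zeta,\tau)$-space are diffeomorphisms; the equivalence then becomes immediate.

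First I would observe that along any solution of \eqref{sf_xzeta} with $z>0$, the function
\[
  H(z,\zeta) := \zeta + \epsilon \log z
\]
is conserved, since $\dot H = \dot\zeta + \epsilon\,\dot z/z = -\epsilon\, g(x,z,\epsilon) + \epsilon\, g(x,z,\epsilon) = 0$. From the parametrizations \eqref{defn_Ieps} and \eqref{defn_Jeps}, every point of $\mathcal{I}_\epsilon$ and $\mathcal{J}_\epsilon$ has $z = z_0$ and $\zeta = \epsilon \log(1/z_0)$, so $H \equiv 0$ on $\mathcal{I}_\epsilon \cup \mathcal{J}_\epsilon$, and the evolved manifolds must lie in the invariant hypersurface
\[
  \Sigma_\epsilon = \{(x,z,\zeta,\tau) : z > 0,\; \zeta = \epsilon \log(1/z)\}.
\]

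The next step is to note that $\Sigma_\epsilon$ is simultaneously a graph of $\zeta$ over $(x,z,\tau)$ (via $\zeta = \epsilon \log(1/z)$) and a graph of $z$ over $(x,\zeta,\tau)$ (via $z = e^{-\zeta/\epsilon}$). Consequently, the restriction to $\Sigma_\epsilon$ of either coordinate projection $\pi_{xz\tau}$ or $\pi_{x\zeta\tau}$ is a diffeomorphism onto its image, and in particular is injective on each of $\mathcal{I}_\epsilon^*$ and $\mathcal{J}_\epsilon^*$.

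The equivalence of (i), (ii), (iii) then falls out. The implications (i)$\Rightarrow$(ii) and (i)$\Rightarrow$(iii) are trivial. For (ii)$\Rightarrow$(i), if $\pi_{xz\tau}(p)=\pi_{xz\tau}(q)$ for some $p\in \mathcal{I}_\epsilon^*$ and $q\in \mathcal{J}_\epsilon^*$, then $p$ and $q$ share the same $(x,z,\tau)$-coordinates, and because they both lie in $\Sigma_\epsilon$ they also share $\zeta = \epsilon \log(1/z)$; hence $p=q \in \mathcal{I}_\epsilon^*\cap \mathcal{J}_\epsilon^*$. The implication (iii)$\Rightarrow$(i) is analogous using $z = e^{-\zeta/\epsilon}$ in place of $\zeta = \epsilon\log(1/z)$. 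There is no serious obstacle: the essential content is that the change of variable \eqref{defn_zeta} remains consistent along every trajectory, a fact that was in effect hard-coded when \eqref{sf_xzeta} was derived from \eqref{sf_tau}.
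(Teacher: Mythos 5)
Your proof is correct and follows essentially the same route as the paper: the paper's one-line argument is precisely that the relation $\zeta=\epsilon\log(1/z)$ holds on both evolved manifolds (the paper has a typo swapping $z$ and $\zeta$), so the three statements are equivalent. Your version usefully spells out why the relation propagates under the flow, by checking $\dot H=0$ for $H=\zeta+\epsilon\log z$, and why projections restricted to the level set are injective, but there is no difference in substance.
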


\begin{proof}
On $\mathcal{I}_\epsilon^*$ and $\mathcal{J}_\epsilon^*$
the relation $z=\epsilon\log(1/\zeta)$ holds,
so these statements are equivalent.
\end{proof}

From this proposition,
to prove \eqref{closeness_gamma}
we only need to check that
the projections of $\mathcal{I}_\epsilon^*$ and $\mathcal{J}_\epsilon^*$
in $(x,\zeta,\tau)$-space have nonempty intersection.
This will be confirmed in Section \ref{sec_complete}
using the Exchange Lemma stated in Section \ref{sec_EL}.

\section{A Simple Case of the Exchange Lemma} \label{sec_EL}
To track manifolds for the system \eqref{sf_xzeta},
we will apply the following simple case of the Exchange Lemma.

\begin{thm}\label{thm_EL}
Consider a system for
$(b,c)\in \mathbb R\times \mathbb R^l$, $l\ge 1$,
of the form
\beqeps{sf_bc}
  &\dot b= \rho(b,c,\epsilon)b\\
  &\dot c= \epsilon h(c,\epsilon)
\]
defined on a set of the form $
  {}[0,\Delta]\times \overline{U},
$
where $\Delta>0$ and $U$ is a bounded open set in $\mathbb R^l$.
Assume the coefficients are $C^r$ functions, $r\ge 1$, satisfying \beq{cond_h}
  \rho(b,c,\epsilon)<-\nu
  \quad\text{and}\quad
  |h(c,\epsilon)|>\nu
\] for some $\nu>0$.
Let $\Lambda$
be a $(\sigma-1)$-dimensional compact manifold in $\mathbb R^l$, $1\le \sigma\le l$.
Suppose for each $c\in \Lambda$
we have $h(c)\notin T_{c}\Lambda$.
That is, the slow flow \beq{slow_bc}
  c'= h(c,0)
\] is not tangent to $\Lambda$.
Fix any $\tau_+>\tau_->0$ satisfying \[
  \Lambda\circ [0,\tau_+]\subset U,
\] where $\circ$ is the solution operator for \eqref{slow_bc},
and set $M= \Lambda\circ [\tau_-,\tau_+]$.
Then there exists a positive number $\Delta_1<\Delta$ such that 
for any $b_0\in (0,\Delta_1)$
and $C^r$ function $\varphi(c,\epsilon)$,
if we set \beq{def_Ieps_EL}
  \mathcal I_\epsilon^\din
  = \{
  (b,c)= 
  \big(
  b_0,\hat{c}+\epsilon\varphi(\hat{c},\epsilon)
  \big): \hat{c}\in \Lambda
  \big\},
\]
then there is on neighborhood $V$ of $M$ such that \beq{I_close}
  \text{
    ${\mathcal I}_\epsilon^*\cap V$ 
    is $C^r$ $O(\epsilon)$-close to $M$,
  }
\] where ${\mathcal I}_\epsilon^*$ is the manifold evolved from $\mathcal I_\epsilon^\din$
along \eqref{sf_bc}.
\end{thm}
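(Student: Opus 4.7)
My plan is to exploit the fact that in the simple system \eqref{sf_bc} the slow equation $\dot c=\epsilon h(c,\epsilon)$ is decoupled from $b$, so after rescaling to slow time $s=\epsilon t$ the $c$-component of any trajectory is just the flow of the standard ODE $dc/ds=h(c,\epsilon)$ starting from its initial datum. I therefore parametrize $\mathcal I_\epsilon^*$ by $(\hat c,s)\in\Lambda\times[0,\tau_+]$ via the map $(\hat c,s)\mapsto(\tilde b(s;\hat c,\epsilon),\tilde c(s;\hat c,\epsilon))$, where $(\tilde b,\tilde c)$ is the trajectory of \eqref{sf_bc} through $(b_0,\hat c+\epsilon\varphi(\hat c,\epsilon))$, and compare this with the parametrization $(\hat c,s)\mapsto(0,\hat c\circ s)$ of $M$ on $\Lambda\times[\tau_-,\tau_+]$ in $C^r$-norm.

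First I would handle $\tilde c$. In slow time the $c$-equation reads $c_s=h(c,\epsilon)$, so classical smooth dependence on initial data and parameters gives that $\tilde c(s;\hat c,\epsilon)$ is $C^r$ in $(\hat c,\epsilon,s)$ and $\|\tilde c(s;\cdot,\epsilon)-(\cdot\circ s)\|_{C^r(\Lambda)}=O(\epsilon)$ uniformly for $s\in[0,\tau_+]$; the $O(\epsilon)$ error comes from both the $\epsilon\varphi$-perturbation of the initial datum and the $\epsilon$-dependence of $h$. For $\tilde b$, the hypothesis $\rho\le-\nu$ yields the pointwise bound $\tilde b(s;\hat c,\epsilon)\le b_0\,e^{-\nu s/\epsilon}$, which is beyond all orders of $\epsilon$ once $s\ge\tau_->0$. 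The same exponential smallness then propagates to all derivatives $\partial^\alpha_{(\hat c,s)}\tilde b$ of order $\le r$: writing the variational equations along the trajectory, bounding the homogeneous part by a Gronwall factor $e^{-\nu(t-t')/2}$ (which survives the small correction from $b\,\partial_b\rho$ because $b$ is already exponentially small), and using the exponential smallness of $\tilde b$ itself in the inhomogeneous forcing, one obtains inductively a bound of the form $|\partial^\alpha_{(\hat c,s)}\tilde b|\le C_\alpha\,\epsilon^{-|\alpha|}\,e^{-\nu s/(2\epsilon)}$, still exponentially small for $s\ge\tau_-$.

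Combining the two estimates on $\Lambda\times[\tau_-,\tau_+]$ shows that the parametrization of $\mathcal I_\epsilon^*$ differs from that of $M$ by $O(\epsilon)$ (in fact, by a quantity beyond all orders of $\epsilon$ in the $b$-direction) in $C^r$. Choosing $\Delta_1<\Delta$ small enough that $\tilde c(s;\hat c,\epsilon)\in U$ for all $(\hat c,s)\in\Lambda\times[0,\tau_+]$ and all $b_0\in(0,\Delta_1)$, and letting $V$ be a tubular neighborhood of $M$ narrow enough that only trajectory points with $s\ge\tau_-$ contribute to $\mathcal I_\epsilon^*\cap V$, yields \eqref{I_close}.

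The main technical obstacle is the $C^r$ bookkeeping for $\tilde b$: each differentiation in $s$ produces a factor of $1/\epsilon$ when switching between fast and slow time, so an induction on the variational equations is needed to verify that these polynomial-in-$1/\epsilon$ losses are always absorbed by the exponential decay $e^{-\nu s/(2\epsilon)}$. The decoupled form of the slow equation — the fact that $h$ does not depend on $b$ — is precisely what makes this direct approach clean, bypassing the more elaborate invariant-manifold and foliation machinery required for the general Exchange Lemma.
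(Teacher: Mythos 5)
Your proof is correct, and it takes a genuinely different route from the paper's. The paper establishes Theorem~\ref{thm_EL} purely by citation: the $C^1$ case from the $(k+\sigma)$-Exchange Lemma of Jones and Tin, and the $C^r$ case from Schecter's General Exchange Lemma. Those results apply in much greater generality (the slow vector field may depend on the fast variable $b$), and their proofs rest on Fenichel normal forms and invariant foliations. You observe that in the present statement the slow equation $\dot c=\epsilon h(c,\epsilon)$ is already \emph{decoupled} from $b$, so that machinery is unnecessary: $\tilde c$ solves a regularly perturbed ODE in slow time and is $C^r$ $O(\epsilon)$-close to $\hat c\circ s$ by smooth dependence on data and parameters, while $\tilde b$ obeys $\dot b=\rho b$ with $\rho\le -\nu$, decays like $e^{-\nu s/\epsilon}$ (beyond all orders once $s\ge\tau_->0$), and this decay propagates through the variational equations once $\Delta_1$ is shrunk so that $|b\,\partial_b\rho|\le\nu/2$, absorbing the polynomial-in-$1/\epsilon$ factors that appear when converting $t$-derivatives to $s$-derivatives. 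Two small imprecisions are worth flagging. First, comparing \emph{parametrizations} is not literally the conclusion \eqref{I_close}, which is a graph statement over $M$; you should add the inversion step — the map $(\hat c,s)\mapsto\tilde c(s;\hat c,\epsilon)$ is $C^1$-close to $(\hat c,s)\mapsto\hat c\circ s$, which is a diffeomorphism by the transversality hypothesis $h\notin T_c\Lambda$ and $|h|>\nu$, so its inverse is uniformly $C^r$ bounded and composing gives $\tilde b$ as a $C^r$-small function of $c$. Second, shrinking $\Delta_1$ is needed for the Gronwall contraction constant, not, as written, to keep $\tilde c$ in $U$: $\tilde c$ is independent of $b_0$, and confining it to $U$ is a matter of taking $\epsilon_0$ small. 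Your approach buys a self-contained, elementary proof and makes transparent why this case is ``simple'' (the decoupling trivializes the tracking near the slow manifold), whereas the paper's citation-based proof leverages the same external results again for the parameter-uniform smoothness in Remark~\ref{rmk_smooth} and for Corollary~\ref{cor_EL}, where the slow equation genuinely couples to $b$ and your direct argument would no longer apply as stated.
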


\begin{proof}
The assertion \eqref{I_close}
with $r=1$
follows from the $(k+\sigma)$-Exchange Lemma \cite[Theorem 6.7]{Jones:2009}
with $(k,m)=(0,1)$.
The $C^r$ smoothness follows from
the General Exchange Lemma \cite{Schecter:2008b}.
\end{proof}

See Fig \ref{fig_EL_bc} for an illustration of the theorem.
The assertion \eqref{I_close}
means that there is a $C^r$ function $\tilde{b}(c,\epsilon)$
defined on $M\times [0,\epsilon_0]$,
where $\epsilon_0>0$,
satisfying \[
  (\tilde{b}(c,\epsilon),c)\in {\mathcal I}_\epsilon^*
  \quad\forall\; c\in M,\; \epsilon\in (0,\epsilon_0]
\] and $\|\tilde{b}(\cdot,\epsilon)\|_{C^r(M)}= O(\epsilon)$.

\begin{rmk}
The theorem
can also be interpreted as a special case
of the General Exchange Lemma in Schecter \cite{Schecter:2008b},
the \emph{Strong $\lambda$-Lemma} in Deng \cite{Deng:1989},
or the \emph{$C^r$-Inclination Theorem} in Brunovsky \cite{Brunovsky:1999}.
\end{rmk}

\begin{rmk}\label{rmk_smooth}
Let $\{\Lambda_{\mu}\}_{\mu\in A}$,
where $A$ is a compact interval,
be a $C^r$ family of $(\sigma-1)$-dimensional manifolds. 
If we replace $\Lambda$ by $\Lambda_\mu$
and denote the corresponding $\mathcal I_\epsilon$ by $\mathcal I_{\epsilon,\mu}$.
Then,
as a result of the General Exchange Lemma \cite{Schecter:2008b},
${\mathcal I}_{\epsilon,\mu}^*\cap V$
is uniformly $C^r$ for $(\mu,\epsilon)\in A\times (0,\epsilon_0]$.
\end{rmk}

\begin{figure}[t]
{\centering
{\includegraphics[trim = 2cm 7.8cm 2cm 6.8cm, clip, width=.5\textwidth]
{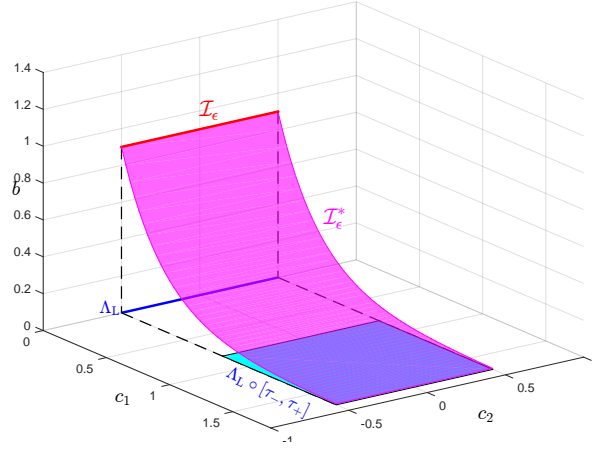}}\\
}
\caption{
Consider the system $(\dot{b},\dot{c}_1,\dot{c}_2)=(-b,\epsilon,0)$.
Suppose $\mathcal I_\epsilon^\din=\{b_0\}\times \Lambda$.
Then the projection of $\mathcal I_\epsilon^\din$ along the stable fibers is $\Lambda$.
The Exchange Lemma assures that $\mathcal{I}_\epsilon^*$,
the manifold evolved from $\mathcal I_\epsilon^\din$,
is $C^1$ $O(\epsilon)$-close to $\Lambda\circ [\tau_-,\tau_+]$,
where $\circ$ is the solution operator for the slow flow $(c_1',c_2')=(1,0)$.
}
\label{fig_EL_bc}
\end{figure}

\begin{cor}\label{cor_EL}
Assume all the assumptions in Theorem \ref{thm_EL}
except we replace \eqref{sf_bc} by \beqeps{sf_bc_perturbed}
  &\dot b= \rho(b,c,\epsilon)b\\
  &\dot c= \epsilon h(c)+ \epsilon b E(b,c,\epsilon),
\]
where the coefficients are $C^{r+1}$ functions, $r\ge 1$.
Then conclusion in the theorem still holds.
Moreover, the mapping \[
  \Pi_\epsilon:
  \mathcal I_\epsilon^*\cap V
  \to \mathcal I_\epsilon\times [\tau_-,\tau_+],\quad
  q_\epsilon\mapsto (q^\din_\epsilon,\tau_\epsilon),
\] defined by \[
  q_\epsilon= q_\epsilon^\din\cdot (\tau_\epsilon/\epsilon),
\]
where $\cdot$ is the solution operator for \eqref{sf_bc_perturbed},
is uniformly $C^r$ in $\epsilon$.
\end{cor}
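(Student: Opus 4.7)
The plan is twofold: first, to extend the conclusion of Theorem \ref{thm_EL} to the perturbed form \eqref{sf_bc_perturbed}; second, to upgrade that set-level statement to $C^r$ smoothness of $\Pi_\epsilon$, uniformly in $\epsilon$.

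For the first step I will exploit the fact that $\{b=0\}$ remains invariant under \eqref{sf_bc_perturbed}, because the perturbation in the slow equation carries an explicit factor of $b$. The slow flow on this invariant manifold is still $\dot c=\epsilon h(c)$, so the limiting slow dynamics are unchanged. There are two clean ways to conclude: either invoke the General Exchange Lemma of Schecter \cite{Schecter:2008b} directly, which already allows slow-direction perturbations that vanish on the slow manifold; or perform a $C^{r+1}$ near-identity change of coordinates $\tilde c=c+\epsilon\psi(b,c,\epsilon)$ with $\psi(0,c,\epsilon)=0$ chosen so that $\psi_b\rho+E=0$, cancelling the $\epsilon bE$ term to leading order and reducing \eqref{sf_bc_perturbed} to \eqref{sf_bc} modulo a higher-order error that fits into the same framework. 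Either route yields the closeness statement \eqref{I_close} for \eqref{sf_bc_perturbed}.

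For the smoothness of $\Pi_\epsilon$, my plan is to augment the state space with tracking variables that record the entry point and the elapsed slow time. Adjoin new coordinates $(\hat c_0,s)\in\Lambda\times\mathbb R$ governed by $\dot{\hat c}_0=0$, $\dot s=\epsilon$, and take as new initial manifold
\begin{equation}
\widetilde{\mathcal I}_\epsilon=\bigl\{(b_0,\hat c+\epsilon\varphi(\hat c,\epsilon),\hat c,0):\hat c\in\Lambda\bigr\}.
\end{equation}
By construction $(\hat c_0,s)$ are carried along each trajectory as precisely the entry point in $\mathcal I_\epsilon$ and the elapsed slow time, which is the data $\Pi_\epsilon$ is supposed to return. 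The augmented system has the same form as \eqref{sf_bc_perturbed}, with enlarged slow variable $(c,\hat c_0,s)$, slow flow $(h(c),0,1)$, and perturbation $(E,0,0)$; the transversality hypothesis of Theorem \ref{thm_EL} is preserved since the last slow-flow component is constantly $1$ while $T\widetilde\Lambda$ has zero $s$-component. Applying the first part of the corollary to this augmented system, and treating $\hat c_0$ as a parameter in the sense of Remark \ref{rmk_smooth}, gives uniform $C^r$ $O(\epsilon)$-closeness of $\widetilde{\mathcal I}_\epsilon^*$ to its slow manifold. This is exactly the statement that the parametrization $(\hat c_0,s)\mapsto q_\epsilon(\hat c_0,s)$ of $\mathcal I_\epsilon^*\cap V$ is uniformly $C^r$ in $\epsilon$; transversality and the inverse function theorem then give that this map is a $C^r$ diffeomorphism, so its inverse $\Pi_\epsilon$ is uniformly $C^r$ as claimed.

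The main obstacle I anticipate is maintaining $C^r$ control all the way down to $\epsilon=0$: the fast-time transit is of order $1/\epsilon$, and a direct variational estimate along such trajectories does not yield bounded derivatives as $\epsilon\to 0$. This is exactly the difficulty the General Exchange Lemma was designed to resolve by tracking derivatives in slow time and exploiting exponential contraction in the $b$-direction. The augmentation device is what converts the smoothness of the evolved manifold into smoothness of the forward/backward correspondence encoded by $\Pi_\epsilon$, rather than the mere set-level statement \eqref{I_close}.
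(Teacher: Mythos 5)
Your proposal takes a genuinely different route from the paper for the substantive part (the smoothness of $\Pi_\epsilon$), and it is a valid one, but one of your two suggested routes for the first part has a gap worth noting.

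For the first part, the paper does not attempt a formal near-identity normalization. It invokes Fenichel's theorem to obtain a \emph{single} $C^{r+1}$ change of variables $(\tilde b,\tilde c)=(b,\,c-\epsilon b\,\theta(b,c,\epsilon))$, coming from the invariant stable foliation, which conjugates \eqref{sf_bc_perturbed} \emph{exactly} to a system $\dot{\tilde b}=\tilde\rho\tilde b,\ \dot{\tilde c}=\epsilon\tilde h(\tilde c,\epsilon)$ of the form \eqref{sf_bc}, so that Theorem \ref{thm_EL} applies verbatim in the new coordinates. Your second suggested route, a single near-identity change $\tilde c=c+\epsilon\psi(b,c,\epsilon)$ with $\psi_b\rho+E=0$, only kills the $\epsilon b$ term at leading order: the chain-rule term $\epsilon^2\psi_c(h+bE)$ still carries a factor of $b$ (since $\psi(0,c,\epsilon)=0$ forces $\psi_c(0,c,\epsilon)=0$), so you are left with a system still of type \eqref{sf_bc_perturbed}, not \eqref{sf_bc}. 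A finite iteration never escapes this loop, so you need either the exact Fenichel conjugation or your first route (citing the General Exchange Lemma directly, which is fine and is also effectively what the paper falls back on). I'd drop the near-identity route unless you intend to iterate to all orders.

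For the smoothness of $\Pi_\epsilon$, the paper proceeds concretely: it uses the Fenichel projection $\pi^+_\epsilon$ to push $q_\epsilon$ and $\mathcal I_\epsilon^\din$ onto $\{b=0\}$, solves for $(c_\epsilon^0,\tau_\epsilon)$ there via the non-tangency of the slow flow to $\Lambda_\epsilon$, and then pulls back $c^0_\epsilon$ to $c^\din_\epsilon$ through $(\pi^+_\epsilon)^{-1}$ restricted to $\{b=b_0\}$, verifying that each link in the chain is uniformly $C^r$. Your augmentation device, adjoining $(\hat c_0,s)$ with $\dot{\hat c}_0=0$, $\dot s=\epsilon$ so that the data $\Pi_\epsilon$ is supposed to return are carried along the flow as extra slow variables, is a different and more conceptual construction. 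Your transversality check (the $s$-component $1$ versus the zero $s$-component of $T\widetilde\Lambda$) is correct, and the inverse function theorem does finish the job, though you should be explicit that the projections of $\widetilde{\mathcal I}_\epsilon^*$ onto both the $(\hat c_0,s)$-plane and the $(b,c)$-space are immersions whose Jacobians are bounded away from degeneracy uniformly in $\epsilon$ (this follows from the $C^1$ $O(\epsilon)$-closeness to $\tilde M$ together with the non-tangency of the slow flow to $\Lambda\circ s$). What the paper's route buys is a self-contained proof that stays in the original dimension and produces the map $\Pi_\epsilon$ explicitly; what your route buys is a proof that avoids handling $\pi^+_\epsilon$ directly and reuses the closeness statement \eqref{I_close} as a black box, at the cost of doubling the slow variables and of being careful about what ``$C^r$ $O(\epsilon)$-close'' means when the perturbed slow flow depends on $b$.
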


\begin{proof}
By Fenichel's Theorem \cite{Fenichel:1979},
there exists $\Delta_1>0$ 
and a $C^{r+1}$ function $\pi^+_\epsilon$ of the form \[
  \pi^+_\epsilon(b,c)
  = \big(0,c+\epsilon b \theta(b,c,\epsilon)\big)
\] defined on a neighborhood of \[
  {}[0,\Delta_1]\times (\Lambda\circ [0,\tau_+])
\]
such that \beq{pi_fenichel}
  \pi^+_\epsilon\Big(
  (b,c)\cdot t
  \Big)
  = \Big(
  \pi^+_\epsilon
  \big(
  b,c)
  \big)
  \Big)\cdot t,\quad t\ge 0,
\] where $\cdot$ is
the solution operator of \eqref{sf_bc_perturbed}.
This implies that the new coordinates \beq{def_tilde_bc}
  \big(\tilde b,\tilde c\big)
  = \big(b,c-\epsilon b \theta(b,c,\epsilon)\big)
\] brings \eqref{sf_bc_perturbed} into a system of the form \beqeps{sf_bc1}
  &\dot {\tilde{b}}= \tilde\rho(\tilde b,\tilde c,\epsilon)\tilde b\\
  &\dot {\tilde c}= \epsilon \tilde{h}(\tilde c,\epsilon),
\]
where $\tilde\rho$ and $\tilde h$ are $C^r$ functions,
and $\tilde h$ satisfies \beq{tilde_h0}
  \tilde h(\tilde c,\epsilon)\big|_{b=0,\epsilon=0}
  = h(c,0).
\] 
Decrease $\Delta_1$ if necessary so that
the conclusion in Theorem \ref{thm_EL} holds.
Choose any $b_0\in (0,\Delta_1]$ and $C^{r+1}$ function $\varphi(c,\epsilon)$,
and set $\mathcal I_\epsilon^\din$ by \eqref{def_Ieps_EL}.
Then we can write $\mathcal{I}_\epsilon^\din$ as \[
  \mathcal I_\epsilon^\din
  = \{(b,c): (\tilde{b},\tilde{c})
  =\big(b_0,\hat{c}+\epsilon\tilde\varphi(\hat{c},\epsilon)\big): \hat{c}\in \Lambda\}
\] for some $C^{r+1}$ function $\tilde{\varphi}$.
By Theorem \ref{thm_EL} and \eqref{tilde_h0}
there exists a neighborhood $V$ of \[
  \tilde{M}\equiv \{(b,c): (\tilde b,\tilde c)\in \Lambda \circ [\tau_-,\tau_+]\},
\] where $\circ$ is the solution operator of \eqref{slow_bc},
such that \[
  \text{
  $\mathcal{I}_\epsilon^*\cap V$
  is $C^r$ $O(\epsilon)$-close to
  $\tilde{M}$
  }
\] From \eqref{def_tilde_bc} we know
$\tilde{M}$ is $C^{r+1}$ $O(\epsilon)$-close to $M$,
so we obtain \eqref{I_close}.

Given $q_\epsilon=(b_\epsilon,c_\epsilon)\in \mathcal I_\epsilon^*\cap V$,
write $(q_\epsilon^\din,\tau_\epsilon)
=((b_0,c_\epsilon^\din),\tau_\epsilon)=\Pi_\epsilon(q)$.
Let \[
  \pi^+_\epsilon(b_\epsilon,c_\epsilon)
  = (0,c_\epsilon^1)
  \quad\text{and}\quad
  \Lambda_\epsilon
  = \pi^+_\epsilon(\mathcal I_\epsilon^\din)
\]
Since $\pi^+_\epsilon$ is uniformly $C^r$
and the flow $c'= h(c,\epsilon)$ is non-tangential to $\Lambda_\epsilon$,
we can uniquely define a uniformly $C^r$ function
$c_\epsilon^1\mapsto (c_\epsilon^0,\tau_\epsilon)$
by \[
  c_\epsilon^1= c_\epsilon^0\bullet \tau_\epsilon,
  \quad c_\epsilon^0\in \Lambda_\epsilon,
\] where $\bullet$ is the solution operator for $c'= h(c,\epsilon)$.
From \eqref{pi_fenichel} we have \[
  c_\epsilon^\din
  = \left(
  (\pi^+_\epsilon)^{-1}(0,c_\epsilon^0)
  \right) \cap \{b=b_0\}.
\]
Note that the restriction of $\pi^+_\epsilon$ on $\{b=b_0,c\in V\}$
is a local diffeomorphism
into $\{b=0,c\in V\}$.
Hence the function $c_\epsilon^0\mapsto c_\epsilon^\din$
is well-defined and is uniformly $C^r$.

Consider the function $q_\epsilon\mapsto (c_\epsilon^\din,\tau_\epsilon)$
as the composition of the following sequence \[
  q_\epsilon
  \mapsto c_\epsilon^1
  \mapsto (c_\epsilon^0,\tau_\epsilon)
  \mapsto (c_\epsilon^\din,\tau_\epsilon)
  \mapsto (c_\epsilon^\din,\tau_\epsilon)
\] We have seen that
each mapping in the sequence is uniformly $C^r$ in $\epsilon$.
Hence the function
$q_\epsilon\mapsto q_\epsilon^\din=((b_0,c_\epsilon^\din),\tau_\epsilon)$ is uniformly $C^r$.
\end{proof}

\section{Completing the Proof of the Main Theorem} \label{sec_complete}
Let $\mathcal{I}_\epsilon^*$ and $\mathcal{J}_\epsilon^*$
be the manifolds evolved from
$\mathcal{I}_\epsilon$ and $\mathcal{J}_\epsilon$, respectively,
defined in \eqref{defn_Ieps}-\eqref{defn_Jeps},
along the flow \eqref{sf_xzeta}.

\begin{prop}\label{prop_Imid}
Fix any positive number $\delta<\min\{\frac{|x_0|-\Delta}4,\frac{x_1-\Delta}4\}$.
Let \[
  \gamma_0^\delta
  = \gamma_0\cap \{x\in [x_0+\delta,x_1-\delta]\}
  \quad\text{and}\quad
  M_{L,R}^\delta
  = M_{L,R}\cap \{x\in [x_0+\delta,x_1-\delta]\}.
\] Then there is
a neighborhood $V$ of $\gamma_0^\delta$
such that \beq{I_close_Mdelta}
  \text{
  $\mathcal I_\epsilon^*\cap V$
  is $C^r$ $O(\epsilon)$-close to
  $M_L^\delta\cap V$
  }
\] and \beq{J_close_Mdelta}
  \text{
  $\mathcal J_\epsilon^*\cap V$
  is $C^r$ $O(\epsilon)$-close to
  $M_R^\delta\cap V$.
  }
\] Moreover,
if we consider $\mathcal I_\epsilon= \mathcal I_{\epsilon,x_0}$
as a function of $x_0$
in the definition \eqref{defn_Ieps},
then there exists $\epsilon_0>0$ and $\delta_1>0$
such that \[
  \text{
  $\mathcal I_{\epsilon,x}^*\cap V$ is uniformly $C^r$ for
  $(x,\epsilon)\in [x_0-\delta_1,x_0+\delta_1]\times (0,\epsilon_0]$.
  }
\]
\end{prop}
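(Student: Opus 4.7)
The plan is to invoke Corollary \ref{cor_EL} on the attracting portion of the slow manifold, obtaining that $\mathcal{I}_\epsilon^*$ is $C^r$ $O(\epsilon)$-close to $M_L$ up to some $x=-\delta_0<0$, and then to propagate this closeness across the turning point to $x=x_1-\delta$ by a direct Gronwall estimate in slow time $\tau$ that exploits the super-polynomial smallness of $z=\exp(-\zeta/\epsilon)$ on the relevant piece of $\mathcal{I}_\epsilon^*$.

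For the first step, I identify the fast variable $b=z$ and the slow variable $c=(x,\zeta,\tau)$, so that \eqref{sf_xzeta}, after Taylor expanding $f,g$ in $z$ around $z=0$, is of the form \eqref{sf_bc_perturbed} with $\rho(b,c,\epsilon)=g(x,z,\epsilon)$. The manifold $\mathcal{I}_\epsilon$ of \eqref{defn_Ieps} fits the template \eqref{def_Ieps_EL} with base $\Lambda=\Lambda_L$, $b_0=z_0$, and constant perturbation $\varphi(\hat c,\epsilon)=(0,\log(1/z_0),0)$; the slow field $h(c,0)=(f(x,0,0),-g(x,0,0),1)$ is non-tangential to $\Lambda_L$, since its $x$-component is strictly positive while $\Lambda_L$ is parametrized in the $\tau$-direction. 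I pick $\delta_0>0$ with $x_0+\delta<-\delta_0<0$, so that $g<-\nu$ on a neighborhood of $\{x\in[x_0+\delta,-\delta_0]\}$, and choose $\tau_\pm$ so that $\Lambda_L\circ[\tau_-,\tau_+]=M_L\cap\{x\in[x_0+\delta,-\delta_0]\}$. Corollary \ref{cor_EL} then supplies a neighborhood $V_0$ of this set on which $\mathcal{I}_\epsilon^*\cap V_0$ is $C^r$ $O(\epsilon)$-close to $M_L\cap\{x\in[x_0+\delta,-\delta_0]\}$.

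For the extension across $x=0$, I shrink $V$ to a thin tubular neighborhood of $\gamma_0^\delta$ on which $\zeta\ge\zeta_{\min}(\delta)/2$, where $\zeta_{\min}(\delta):=\min_{x\in[x_0+\delta,x_1-\delta]}\zeta_-(x;0)>0$. On $\mathcal{I}_\epsilon^*\cap V$ the relation $z=\exp(-\zeta/\epsilon)$ forces $z=o(\epsilon^N)$ for every $N$, so in slow time $\tau$ the $(x,\zeta)$-projection of the flow from \eqref{sf_xzeta} differs from the slow flow \eqref{slow_xzeta} on $M_L$ by an $O(z+\epsilon)=O(\epsilon)$ perturbation. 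Starting from the $O(\epsilon)$-closeness at $\{x=-\delta_0\}$ supplied by step one, Gronwall's inequality over the bounded slow-time interval yields $O(\epsilon)$-closeness throughout $V$. $C^r$-control is obtained by differentiating in the parameter $\sigma$ of $\mathcal{I}_\epsilon$ and noting that $\partial_\zeta^k\exp(-\zeta/\epsilon)=(-1/\epsilon)^k\exp(-\zeta/\epsilon)=o(\epsilon^N)$, so the variational equations along $\mathcal{I}_\epsilon^*$ and along $M_L$ also agree to order $O(\epsilon)$. The assertion \eqref{J_close_Mdelta} for $\mathcal{J}_\epsilon^*$ follows from the same argument in reverse time, where $-g<-\nu$ on $\{x\ge\delta_0>0\}$, and the uniform $C^r$ dependence on $x_0$ comes from Remark \ref{rmk_smooth} applied in step one combined with the parametric uniformity of the Gronwall bound.

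The main obstacle I anticipate is precisely the extension past the turning point $x=0$: the slow manifold $\{z=0\}$ loses normal hyperbolicity there, so Corollary \ref{cor_EL} alone cannot carry the estimate beyond $x=-\delta_0$. What unlocks the continuation is exactly the change of variable $\zeta=\epsilon\log(1/z)$ introduced in Section \ref{sec_sing_config}, which keeps $\zeta$ bounded away from $0$ along $\gamma_0^\delta$ and thereby forces $z$ on the perturbed manifold to be smaller than any power of $\epsilon$; only then does the $(x,\zeta,\tau)$-projection of the fast flow become a genuinely $O(\epsilon)$-small perturbation of the slow flow that Gronwall can handle uniformly in $C^r$.
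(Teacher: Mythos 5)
Your proof follows essentially the same two-step strategy as the paper: Corollary \ref{cor_EL} is used to emit $\mathcal I_\epsilon^*$ from the attracting region near $\Lambda_L$, and then the $\zeta$-coordinate makes the continuation across the turning point a regular perturbation of the slow flow \eqref{slow_xzeta}, which is exactly what your Gronwall estimate (applied also to the variational equations for $C^r$ control) establishes. The only cosmetic difference is where you cut over — you push the Exchange Lemma all the way to $x=-\delta_0$ and run Gronwall from there, whereas the paper exits the Exchange Lemma almost immediately near $x=x_0+\delta$ and runs the regular-perturbation argument over the whole of $[x_0+\delta,x_1-\delta]$ — but the two decompositions are equivalent.
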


\begin{proof}
Choose $\Delta_1>0$ so that
the conclusions of Corollary \ref{cor_EL} holds
for \eqref{sf_xzeta} restricted on \[
  z\in [0,\Delta_1],\quad
  \mathrm{dist}((x,\zeta,\tau),\Lambda_L)\le \delta,
\] (by the choice of $\delta$, this region lies in $\{x<0\}$)
with \[
  \Lambda= \Lambda_L,\quad
  \tau_-=\delta,\quad
  \tau_+= 2\delta.
\] Set \beq{defn_Iin}
  \mathcal I_\epsilon^\din
  = \mathcal I_\epsilon^*\cap \{x<0,z=\Delta_1\}.
\]
It is clear that \[
  \mathcal I_\epsilon^\din
  = \left\{
    \p x \\ z \\ \zeta \\ \tau \pp
    = \p x_0 \\ \Delta_1 \\ 0 \\ 0 \pp
    +\sigma \p 0 \\ 0 \\ 0 \\ 1 \pp
    +\epsilon \p \varphi_1 \\ 0 \\ \log(1/\Delta_1) \\ \varphi_2 \pp
    : |\sigma|<\Delta
  \right\}
\] for some $C^{r+1}$ functions $\varphi_i=\varphi_i(x,\tau,\epsilon)$.
By Corollary \ref{cor_EL},
there exists a neighborhood $\tilde V_L$ of \[
  M_{L}\cap \{x\in [x_0+\tfrac{\delta}2,x_0+2\delta]\}
\] such that \beq{I_close_Mdelta1}
  \text{
  $\mathcal I_\epsilon^*\cap V_L$
  is $C^1$ $O(\epsilon)$-close to
  $M_L\cap V_L$.
  }
\] 
In particular, setting \[
  \mathcal I_\epsilon^\delta
  \equiv \mathcal I_\epsilon^*\cap \{x=x_0+\delta\}
  \quad\text{and}\quad
  \bar{M}_L^\delta
  \equiv M_L\cap \{x=x_0+\delta\}
\] we have \beq{Iout_close}
  \text{
  $\mathcal I_\epsilon^\delta$
  is $C^1$ $O(\epsilon)$-close to
  $\bar{M}_L^\delta$.
  }
\] 
Next we will track $\mathcal I_\epsilon^*$
along $\gamma_0^\delta$
as the manifold evolved from $\mathcal I_\epsilon^\delta$.

Using the relation $z=\exp(-\zeta/\epsilon)$,
we write \eqref{sf_xzeta} as \beqeps{deq_xzeta}
  &\dot{x}= \epsilon f(x,e^{-\zeta/\epsilon},\epsilon) \\
  &\dot{\zeta}= -\epsilon g(x,e^{-\zeta/\epsilon},\epsilon) \\
  &\dot{\tau}= \epsilon.
\] By a rescaling of time, the system is equivalent to \beqeps{deq_xzeta_slow}
  &{x}'= f(x,e^{-\zeta/\epsilon},\epsilon) \\
  &{\zeta}'= - g(x,e^{-\zeta/\epsilon},\epsilon) \\
  &{\tau}'= 1.
\]
Note that the system \eqref{deq_xzeta_slow}
restricted in a neighborhood of $M_L^\delta$ 
is a regular perturbation of \eqref{slow_xzeta}
since $\inf_{M_L^\delta}\zeta>0$.
Also note that $M_L^\delta$
is evolved from $\hat M_L^\delta$ along \eqref{slow_xzeta}
and $\mathcal I_\epsilon^*$
is evolved from $\mathcal I_\epsilon^\delta$
along \eqref{deq_xzeta},
so it follows from \eqref{Iout_close} that
there is neighborhood $V_L$ of $M_L^\delta$
such that the following is true: \beq{I_close_MLdelta_p}
  \text{
  The projection of $\mathcal I_\epsilon^*\cap V_L$
  in $(x,\zeta,\tau)$-space
  is $C^r$ $O(\epsilon)$-close to $M_L^\delta$.
  }
\]
Using again the relation $z=\exp(-\zeta/\epsilon)$,
we then obtain \eqref{I_close_Mdelta} with $V$ replaced by $V_L$.
The $C^r$ smoothness of $\mathcal I_\epsilon^*\cap V_L$ in $(x_0,\epsilon)$,
as indicated in Remark \ref{rmk_smooth},
is a result of the General Exchange Lemma \cite{Schecter:2008b}.
Similarly,
\eqref{I_close_Mdelta} holds with $V$ replaced by
some neighborhood $V_R$ of $M_R^\delta$.
Now take $V=V_L\cap V_R$.
Then $V$ is a neighborhood of $\gamma_0^\delta$
and \eqref{I_close_Mdelta}-\eqref{J_close_Mdelta} hold.
\end{proof}

In Proposition \ref{prop_MLR}
we have seen that $M_L$ and $M_R$
intersect transversally along $\gamma_0$ in $(x,\zeta,\tau)$-space,
since transversal intersections persist under $C^1$ perturbation
(see e.g. \cite[Theorem 6.35]{Lee:2013}),
it follows from \eqref{I_close_Mdelta}-\eqref{J_close_Mdelta}
in Proposition \ref{prop_Imid} that
the projections of $\mathcal I_\epsilon^*\cap V$ and $\mathcal J_\epsilon^*\cap V$
in $(x,\zeta,\tau)$-space
intersect transversally along a curve $\gamma_\epsilon^\delta$
which is $C^1$ $O(\epsilon)$-close to $\gamma_0^\delta$
and is uniformly $C^r$ for $(x_0,\epsilon)$.
Using the relation $z=\exp(-\zeta/\epsilon)$,
as indicated in Proposition \ref{prop_zzeta},
we then recover an intersection curve $\gamma_\epsilon$
of ${\mathcal{I}}_\epsilon^*$ and ${\mathcal{J}}_\epsilon^*$
in $(x,z,\zeta,\tau)$-space.

From the uniqueness of solution of boundary value problems for \eqref{sf_xz},
the projection of $\gamma_\epsilon$ in $(x,z)$-space
is the trajectory $\bar{\gamma}_\epsilon$ defined in the statement of the Main Theorem.
By construction $\gamma_\epsilon$
lies in a $O(\epsilon)$-neighborhood of the configuration \eqref{config},
so \eqref{closeness_gamma} holds.

To prove the smoothness of the return map $(x_0,\epsilon)\mapsto x_{1,\epsilon}$,
it suffices to show that \[
  q_\epsilon\equiv\gamma_\epsilon\cap \{x>0,z=z_0\}
\]
is uniformly $C^r$ for $(x_0,\epsilon)\in [x_0-\Delta,x_0+\Delta]\times (0,\epsilon_0]$.
Let \[
  q_\epsilon^\delta\equiv \gamma_\epsilon\cap \{x=x_1-\delta\}
  \quad\text{and}\quad
  q_\epsilon^\dout\equiv \gamma_\epsilon\cap \{x>0,z=\Delta_1\}
\]
where $\Delta_1$ is small enough so that
the conclusion of Corollary \ref{cor_EL} holds near $\Lambda_R$.
Now consider the mapping $(x_0,\epsilon)\mapsto q_{1,\epsilon}$
as the composition of the following sequence: \[
  (x_0,\epsilon)
  \overset{\psi_1}{\longmapsto} p_\epsilon^\delta
  \overset{\psi_2}{\longmapsto} q_\epsilon^\dout
  \overset{\psi_3}{\longmapsto} q_{1,\epsilon}
\]
From Proposition \ref{prop_Imid}
we know $\psi_1$ is uniformly $C^r$.
From Corollary \ref{cor_EL}
we know $\psi_2$ is uniformly $C^r$.
On the other hand, it is clear that $\psi_3$ is uniformly $C^r$
since it is the Poincar\'{e} map
of a regularly perturbed flow along $\gamma_i$.
Hence we conclude that
the mapping $(x_0,\epsilon)\mapsto q_\epsilon$
is uniformly $C^r$ for $(x,\epsilon)\in [x_0-\Delta,x_0+\Delta]\times (0,\epsilon_0]$.
This proves the Main Theorem.

\section*{Acknowledgments}
The author would like to thank
Professors S.~Schecter 
and the anonymous referee's comments,
which helped to extend the results in this notes,
especially on the regularity of the return map.
The author would also like to thank
Professor X.-B.~Lin
for his insightful comments on this problem.


\end{document}